\documentclass[preprint,12pt]{elsarticle}

\usepackage{amsmath}
\usepackage{amssymb}
\usepackage{amsthm}
\usepackage{booktabs}
\usepackage{lineno}

\newtheorem{theorem}{Theorem}[section]
\newtheorem{lemma}[theorem]{Lemma}

\newtheorem{corollary}[theorem]{Corollary}
\theoremstyle{definition}
\newtheorem{remark}[theorem]{Remark}

\newtheorem{problem}[theorem]{Problem}

\newcommand{\leg}[2]{\left(\dfrac{#1}{#2}\right)}

\begin{document}

\begin{frontmatter}

\title{The Diophantine equation $p^{x}+6^{y}=z^{2n}$: a complete solution
for half of the primes}

\author{Pagdame Tiebekabe}
\ead{tpagdame.maths@univkara.tg}
\address{Department of Mathematics, University of Kara, Kara, Togo}

\begin{abstract}
Let $p$ be a prime number and let $n\geq 1$ be an integer. We completely
solve the exponential Diophantine equation $p^{x}+6^{y}=z^{2n}$ in positive
integers $x,y,z$ for every prime $p$ such that the Legendre symbol
$\leg{6}{p}$ equals $-1$, that is, for $p\equiv 7,11,13,17\pmod{24}$: the
equation has no solution, with the single exception
$(p,n,x,y,z)=(13,1,1,2,7)$. Since these residue classes contain half of all
primes in the sense of Dirichlet density, this settles the equation for one
prime out of two, uniformly in the exponent $n$. Two ingredients of the proof
have independent interest. First, we show that the Ramanujan--Nagell type
equation $p^{x}=2\cdot 6^{m}+1$ has no solution with $x\geq 2$, using
Zsigmondy's theorem on primitive prime divisors; this removes the extra
congruence hypotheses that appeared in earlier work of the author on the case
$n=1$ and solves the open problems formulated there. Secondly, we prove that
$13^{x}+6^{y}=z^{2}$ has $(x,y,z)=(1,2,7)$ as its unique solution, thereby
completely resolving the exceptional prime. The Legendre condition is sharp:
we exhibit solutions for primes with $\leg{6}{p}=+1$ realising each branch of
the factorisation argument. All results are corroborated by extensive
computational verification, and we state several open problems, including the
complete classification of the solutions when $\leg{6}{p}=+1$.
\end{abstract}

\begin{keyword}
Exponential Diophantine equation \sep Legendre symbol \sep primitive prime
divisor \sep Zsigmondy's theorem \sep Catalan's conjecture \sep
Ramanujan--Nagell equation \sep Dirichlet density
\MSC[2020] 11D61 \sep 11D45 \sep 11A07 \sep 11A41
\end{keyword}

\end{frontmatter}


\section{Introduction and statement of results}\label{sec:intro}

\subsection{Background}

Exponential Diophantine equations of the form
\begin{equation}\label{eq:axby}
a^{x}+b^{y}=z^{2}
\end{equation}
in positive integers $x,y,z$, with fixed bases $a,b$, form a classical and
still very active topic in number theory; see for instance the survey of
results in \cite{Burshtein2017,Sroysang2012,Suvarnamani2011} and the
references therein. The subject is closely connected with two deep theorems:
Mih\u{a}ilescu's proof of Catalan's conjecture~\cite{Mihailescu2004}, stating
that $3^{2}-2^{3}=1$ is the only solution of $u^{r}-v^{s}=1$ in integers
with $\min\{u,v,r,s\}>1$, and Zsigmondy's theorem~\cite{Zsigmondy1892} on
primitive prime divisors of the sequence $a^{n}-b^{n}$.

When $a$ and $b$ are prime numbers, equations of the shape
$p^{x}+q^{y}=z^{2}$ have been studied by many authors:
Suvarnamani~\cite{Suvarnamani2011} treated $2^{x}+p^{y}=z^{2}$,
Acu~\cite{Acu2007} solved $2^{x}+5^{y}=z^{2}$, Sroysang~\cite{Sroysang2012}
studied $2^{x}+19^{y}=z^{2}$, Burshtein~\cite{Burshtein2019,Burshtein2020}
considered several instances involving the prime $11$, Mina and
Bacani~\cite{MinaBacani2019} investigated $p^{x}+q^{y}=z^{2n}$,
Pakapongpun and Chattae~\cite{PakapongpunChattae2022} dealt with
$p^{x}+7^{y}=z^{2}$, Tadee and Siraworakun~\cite{TadeeSiraworakun2023}
proved non-existence results for $p^{x}+(p+2q)^{y}=z^{2}$, and Nilsrakoo,
Wichana and Nilsrakoo~\cite{Nilsrakoo2026} analysed $p^{x}+(2p)^{y}=z^{2}$
in connection with Fermat and Mersenne primes.

This paper is concerned with the equation
\begin{equation}\label{eq:main}
p^{x}+6^{y}=z^{2n},
\end{equation}
where $p$ is prime, $n\geq 1$ is an integer, and $x,y,z$ are positive
integers. The ``dual'' equation $6^{x}+p^{y}=z^{2}$ was studied by Tadee and
Thaneepoon~\cite{TadeeThaneepoon2023}; the case $p=3$, $n=1$ of
\eqref{eq:main} was solved by Sangam~\cite{Sangam2020}, who found the three
solutions $(x,y,z)\in\{(1,1,3),(2,3,15),(6,4,45)\}$; and the case $n=1$ for
primes $p$ with $\leg{6}{p}=-1$ and $p\not\equiv 1\pmod{16}$ was treated in
earlier work of the author. The general equation \eqref{eq:main} has not been
treated in the literature. A notable difficulty is that \eqref{eq:main}
\emph{does} possess solutions for some primes, for instance
\begin{equation}\label{eq:identities}
\begin{gathered}
5^{4}+6^{3}=29^{2},\qquad 13+6^{2}=7^{2},\qquad 53^{2}+6^{3}=55^{2},\\
73+6^{4}=37^{2},\qquad 433+6^{6}=217^{2},
\end{gathered}
\end{equation}
so that a non-existence theorem must identify a genuine arithmetic
obstruction. We show that this obstruction is the Legendre symbol
$\leg{6}{p}$, and that it suffices to solve the equation completely, for all
exponents $n$ simultaneously, for half of all primes.

\subsection{Main results}

\begin{theorem}[Main theorem]\label{thm:main}
Let $p$ be a prime number with
\begin{equation}\label{eq:hyp}
\leg{6}{p}=-1,
\end{equation}
and let $n\geq 1$ be an integer. Then the equation
\[
p^{x}+6^{y}=z^{2n}
\]
has no solution in positive integers $x,y,z$, with the single exception
\[
(p,n,x,y,z)=(13,\,1,\,1,\,2,\,7).
\]
\end{theorem}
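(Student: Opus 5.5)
The plan is to extract everything from a single quadratic-residue obstruction, reduce to a factorisation, and then close with the Ramanujan--Nagell type result and Catalan's theorem. Throughout, put $w=z^{n}$, so the equation becomes $p^{x}+6^{y}=w^{2}$. Hypothesis \eqref{eq:hyp} forces $p\geq 5$, since the symbol is not $-1$ for $p=2,3$. Hence $p\nmid 6$ and $p\nmid w$.

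\emph{Step 1 (parity of $y$).} Reduce modulo $p$ to get $6^{y}\equiv w^{2}\pmod p$, with $w\not\equiv 0$. If $y$ were odd, then $6\equiv (w\cdot 6^{-(y-1)/2})^{2}\pmod p$ would be a quadratic residue, contradicting \eqref{eq:hyp}. So $y=2m$ with $m\geq 1$.

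\emph{Step 2 (factorisation).} Write $p^{x}=(w-6^{m})(w+6^{m})$. Both factors are positive powers of $p$. Any common divisor divides $2\cdot 6^{m}$, which is coprime to $p$, so $w-6^{m}=1$ and $w+6^{m}=p^{x}$. This yields
\[
p^{x}=2\cdot 6^{m}+1,\qquad z^{n}=6^{m}+1 .
\]

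\emph{Step 3 (the two auxiliary equations).} For the first equation I invoke the result announced in the abstract, that $p^{x}=2\cdot 6^{m}+1$ has no solution with $x\geq 2$ (proved via Zsigmondy). Hence $x=1$ and $p=2\cdot 6^{m}+1$. For the second, if $n\geq 2$ then $z^{n}-6^{m}=1$.
\begin{itemize}
\item If $m\geq 2$, this contradicts Mih\u{a}ilescu's theorem, since all exponents exceed $1$.
\item If $m=1$, then $z^{n}=7$, which is not a perfect power.
\end{itemize}
Therefore $n=1$ and $z=6^{m}+1$.

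\emph{Step 4 (Legendre condition pins down $m$).} For $m\geq 2$ we have $6^{m}\equiv 12$ or $0\pmod{24}$, so $p=2\cdot 6^{m}+1\equiv 1\pmod{24}$. Then $\leg{2}{p}=\leg{3}{p}=1$, hence $\leg{6}{p}=1$, contrary to \eqref{eq:hyp}. Thus $m=1$, giving $p=13$ and $(x,y,z)=(1,2,7)$. Finally check $\leg{6}{13}=-1$, since $13\equiv 13\pmod{24}$, and $13+36=49$.

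The only nonelementary inputs are Catalan's theorem and the Ramanujan--Nagell type lemma. The main obstacle is that lemma (excluding $x\geq 2$), which I take as proved earlier in the paper. Were it unavailable, I would attack it by writing $p^{x}-1=2\cdot 6^{m}$ and applying Zsigmondy to $p^{x}-1$: for $x\geq 3$ a primitive prime divisor outside $\{2,3\}$ appears, while for $x=2$ the factorisation $(p-1)(p+1)=2\cdot 6^{m}$ leaves only finitely many small cases.
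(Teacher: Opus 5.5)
Your proof is correct. The even-$y$ branch (factorisation, $w-6^{m}=1$, Catalan for $n\geq 2$, Theorem~\ref{thm:rn} for the exponent $x$, and $p\equiv 1\pmod{24}$ for $m\geq 2$) is exactly the paper's argument.

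You differ from the paper in the odd-$y$ case. The paper splits it into three subcases:
\begin{itemize}
\item $y=1$;
\item $y\geq 3$ odd with $x$ odd;
\item $y\geq 3$ odd with $x$ even.
\end{itemize}
It uses the quadratic-residue obstruction only in the first two. The third it treats by the factorisation $(z^{n}-p^{a})(z^{n}+p^{a})=6^{y}$. That leads to $p^{a}=3^{y}-2^{y-2}$ or $p^{a}=2^{y-2}3^{y}-1$, and needs Lemma~\ref{lem:mod5} plus congruences modulo $3$ and $8$.

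You observe, correctly, that reduction modulo $p$ uses only $p\mid p^{x}$, i.e.\ $x\geq 1$. It is therefore insensitive to the parity of $x$ and rules out every odd $y$ at once. For the same reason, the paper's last subcase is already empty under \eqref{eq:hyp}. Your route makes Lemma~\ref{lem:mod5} and that subcase analysis unnecessary for Theorem~\ref{thm:main}.

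The paper's longer route buys nothing logically for this theorem. Its value is expository: it isolates the auxiliary equations $p^{a}=3^{y}-2^{y-2}$ and $p^{a}=2^{y-2}3^{y}-1$, which do produce solutions when $\leg{6}{p}=+1$ (Section~\ref{sec:sharp}, Problem~\ref{prob:plus1}).

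One small wording slip: in Step~2 the two factors are powers of $p$ with exponent possibly $0$, not ``positive powers''. Your conclusion $w-6^{m}=1$ shows that this is what you meant.
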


By Lemma~\ref{lem:classes} below, condition \eqref{eq:hyp} is equivalent to
$p\equiv 7,11,13,17\pmod{24}$. Since these are four of the $\varphi(24)=8$
reduced residue classes modulo $24$, Dirichlet's theorem on primes in
arithmetic progressions (see e.g.\ \cite{Apostol1976}) yields:

\begin{corollary}\label{cor:density}
The set of primes $p$ for which $p^{x}+6^{y}=z^{2}$ has no positive integer
solution contains the classes $7,11,13,17\pmod{24}$, with the unique
exception $p=13$; in particular it has relative Dirichlet density at least
$1/2$, and contains $\sim x/(2\log x)$ primes up to $x$.
\end{corollary}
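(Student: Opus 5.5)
The corollary is a direct specialisation of Theorem~\ref{thm:main} combined with standard facts about primes in arithmetic progressions. I would proceed in three steps.

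\textbf{Step 1 (specialise the theorem).} Take $n=1$ in Theorem~\ref{thm:main}. For every prime $p$ with $\leg{6}{p}=-1$, the equation $p^{x}+6^{y}=z^{2}$ then has no solution in positive integers, except when $(p,x,y,z)=(13,1,2,7)$.

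\textbf{Step 2 (identify the primes covered).} By Lemma~\ref{lem:classes}, the condition $\leg{6}{p}=-1$ is equivalent to $p\equiv 7,11,13,17\pmod{24}$. The primes $2$ and $3$ lie in none of these classes, so the Legendre symbol is never applied where it is undefined. Hence every prime in these four classes other than $13$ belongs to the set $S$ of primes for which the equation is insoluble. The prime $13$ genuinely does not belong to $S$, since $13+6^{2}=7^{2}$; this is why it appears as the unique exception.

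\textbf{Step 3 (density and counting).} Every prime $p\geq 5$ lies in one of the $\varphi(24)=8$ reduced residue classes modulo $24$. By Dirichlet's theorem in its quantitative form (the prime number theorem for arithmetic progressions), each class contains
\[
\sim \frac{x}{\varphi(24)\log x}=\frac{x}{8\log x}
\]
primes up to $x$. The four classes together therefore contain $\sim x/(2\log x)$ primes up to $x$, and so have relative density exactly $1/2$ among all primes, both in the natural sense and in the Dirichlet sense.

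Removing the single prime $13$ changes neither the asymptotic count nor the density. So $S$ contains $\gg x/(2\log x)$ primes up to $x$ coming from these classes. Since $S$ may contain further primes with $\leg{6}{p}=+1$, its relative density is at least $1/2$. Read literally, the phrase ``contains $\sim x/(2\log x)$ primes'' refers to the subset of $S$ lying in the four classes, and I would state it that way.

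There is no real obstacle here; all of the difficulty is carried by Theorem~\ref{thm:main}. The only points needing care are the exclusion of $p=2,3$ and the correct reading of ``at least $1/2$''.
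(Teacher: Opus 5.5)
Your proposal is correct and follows essentially the same route as the paper: specialise Theorem~\ref{thm:main} to $n=1$, identify the primes with $\leg{6}{p}=-1$ as the classes $7,11,13,17\pmod{24}$ via Lemma~\ref{lem:classes}, and then apply Dirichlet's theorem with the prime number theorem for arithmetic progressions to get density $4/8=1/2$ and counting function $\sim x/(2\log x)$. Your extra clarifications are sensible points the paper leaves implicit: that $2,3$ lie outside these classes, that $13$ is genuinely excluded because $13+6^{2}=7^{2}$, and that the asymptotic count refers to the part of the set lying in the four classes while the whole set only has density at least $1/2$.
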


Two auxiliary results are needed for the proof of Theorem~\ref{thm:main}, and
both are of independent interest. The first completely determines the
exponent in a Ramanujan--Nagell type equation.

\begin{theorem}\label{thm:rn}
Let $p\geq 5$ be a prime and let $x,m$ be positive integers such that
\begin{equation}\label{eq:rn}
p^{x}=2\cdot 6^{m}+1.
\end{equation}
Then $x=1$.
\end{theorem}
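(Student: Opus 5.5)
The plan is to rewrite \eqref{eq:rn} as
\[
p^{x}-1=2^{m+1}\cdot 3^{m},
\]
so that $p^{x}-1$ has no prime divisors other than $2$ and $3$. Zsigmondy's theorem~\cite{Zsigmondy1892} then forces $x$ to be small. Once $x$ is pinned down to $2$, an elementary factorisation finishes the argument.

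\textbf{Step 1: reduce to $x\le 2$.} Suppose $x\geq 2$. Since $p\geq 5$, the only Zsigmondy exceptions for $p^{x}-1^{x}$ with $x\ge 2$ are $(p,x)=(2,6)$ and $x=2$ with $p+1$ a power of $2$. Hence for every $x\geq 3$ there is a primitive prime divisor $q$ of $p^{x}-1$, i.e.\ $q\mid p^{x}-1$ but $q\nmid p^{k}-1$ for $1\le k<x$.
\begin{itemize}
\item The prime $q$ must be $2$ or $3$.
\item $q\neq 2$, since $2\mid p-1$.
\item $q\neq 3$, since $p\not\equiv 0\pmod 3$ gives $3\mid p^{2}-1$, so the multiplicative order of $p$ modulo $3$ is at most $2<x$.
\end{itemize}
This contradiction gives $x\le 2$. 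The exceptional Mersenne case has $x=2$ anyway and is absorbed into Step 2, so we need not treat it separately.

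\textbf{Step 2: rule out $x=2$.} Write $(p-1)(p+1)=2^{m+1}3^{m}$ with $m\ge 1$.
\begin{itemize}
\item Since $3\mid p^{2}-1$ and $\gcd(p-1,p+1)=2$, the full power $3^{m}$ divides exactly one of $p\mp1$.
\item One of $p\mp 1$ is exactly divisible by $2$ and the other carries the remaining power $2^{m}$.
\item Any factor equal to $2$ or $2\cdot 3^{m}$ must be $p+1$. Indeed, $p-1=2$ gives $p=3$, and $p-1=2\cdot 3^{m}$ with $p+1=2^{m}$ is impossible since then $p+1<p-1$ for $m\ge1$.
\item Hence $p+1=2\cdot 3^{m}$ and $p-1=2^{m}$.
\end{itemize}
Subtracting gives $3^{m}-2^{m-1}=1$, i.e.\ $3^{m}-1=2^{m-1}$. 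We show this has no solution with $m\ge1$:
\begin{itemize}
\item If $m$ is odd, then $3^{m}-1\equiv 2\pmod 4$. This forces $m-1=1$, contradicting $m$ odd; the case $m=1$ gives $2\neq 1$.
\item If $m$ is even, the lifting-the-exponent lemma gives $v_{2}(3^{m}-1)=v_{2}(m)+2$. This is far smaller than $m-1$ except for tiny $m$, and the tiny cases ($m=2$: $8\ne 2$; $m=4$: $80\ne 8$) fail by inspection.
\end{itemize}
Alternatively, $3^{m}>2^{m-1}+1$ for all $m\ge 1$ settles it at once by size. Either way $x=2$ is impossible.

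Thus $x=1$. I expect the only delicate point to be the correct bookkeeping of the Zsigmondy exceptions, in particular making sure that the Mersenne exception at $x=2$ is fully covered by the direct analysis of Step 2. The rest is routine.
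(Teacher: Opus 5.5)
Your proof is correct and follows the paper's route. Zsigmondy's theorem rules out $x\geq 3$, because a primitive divisor of $p^{x}-1=2^{m+1}3^{m}$ cannot be $2$ or $3$. The square case then reduces, via $\gcd(p-1,p+1)=2$, to $3^{m}-2^{m-1}=1$, which fails by size exactly as in Lemma~\ref{lem:consecutive}.

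The one difference is cosmetic. The paper applies Zsigmondy only to odd $x\geq 3$ and handles every even $x=2t$ by the same factorisation with $T=p^{t}$, whereas you use Zsigmondy for all $x\geq 3$ and factor only at $x=2$. In that step you never exclude $p+1=2$, though this is immediate from $p\geq 5$, and the LTE detour is unnecessary given your size bound.
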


\begin{theorem}[The exceptional prime]\label{thm:13}
The equation
\[
13^{x}+6^{y}=z^{2}
\]
has the unique solution $(x,y,z)=(1,2,7)$ in positive integers.
\end{theorem}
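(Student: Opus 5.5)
The plan is to reduce the equation to the Ramanujan--Nagell type equation of Theorem~\ref{thm:rn} using one congruence. That congruence forces $y$ to be even, and the difference of two squares then factors.

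\textbf{Step 1: $y$ is even.} Suppose $13^{x}+6^{y}=z^{2}$ with $x,y,z$ positive integers. Reducing modulo $13$ gives $z^{2}\equiv 6^{y}\pmod{13}$. Here $13\nmid z$, since otherwise $13\mid 6^{y}$. The nonzero squares modulo $13$ are $\{1,3,4,9,10,12\}$, so $6$ is a quadratic non-residue modulo $13$; this agrees with $13\equiv 13\pmod{24}$ and Lemma~\ref{lem:classes}. Hence $\leg{6^{y}}{13}=(-1)^{y}$, and this must equal $\leg{z^{2}}{13}=1$. So $y$ is even, and we write $y=2t$ with $t\geq 1$.

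\textbf{Step 2: factor the difference of squares.} We now have
\[
(z-6^{t})(z+6^{t})=13^{x}.
\]
Any common divisor of the two factors divides both their difference $2\cdot 6^{t}$ and $13^{x}$, so the factors are coprime. Both factors are positive powers of $13$, and the smaller one must therefore equal $1$. This gives $z=6^{t}+1$ and
\[
13^{x}=z+6^{t}=2\cdot 6^{t}+1 .
\]

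\textbf{Step 3: apply Theorem~\ref{thm:rn}.} Take $p=13$ and $m=t$ in Theorem~\ref{thm:rn}; it gives $x=1$. Then $2\cdot 6^{t}=12$, so $t=1$, $y=2$ and $z=7$. Conversely, $13+36=49=7^{2}$, so $(x,y,z)=(1,2,7)$ is indeed a solution, and it is the only one.

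All of the genuine difficulty lies in Theorem~\ref{thm:rn}, which we are allowed to assume here. Within the present argument the only point needing care is the quadratic-character step. It is what rules out odd $y$, and it uses nothing beyond the fact that $\leg{6}{13}=-1$. There is therefore no need for the mod $4$ or mod $8$ case analysis on the parity of $x$, which would otherwise be the natural but much messier route.
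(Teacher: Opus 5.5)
Your proof is correct, and it is shorter than the paper's. The paper splits odd $y$ into three pieces:
\begin{itemize}
\item $y=1$, handled mod $4$ when $x$ is even and mod $13$ when $x$ is odd;
\item $y\geq 3$ odd with $x$ odd, handled mod $13$;
\item $y\geq 3$ odd with $x=2a$ even, where it factors $(z-13^{a})(z+13^{a})=6^{y}$, reduces to $13^{a}=3^{y}-2^{\,y-2}$ or $13^{a}=2^{\,y-2}3^{y}-1$, and eliminates these with Lemma~\ref{lem:mod5}, a mod-$3$ check and the value $y=3$ by hand.
\end{itemize}

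You observe that $13^{x}\equiv 0\pmod{13}$ for every $x\geq 1$. So the quadratic-character argument never depends on the parity of $x$, and it disposes of all odd $y$ at once. Your even-$y$ branch (coprime factorisation, then Theorem~\ref{thm:rn}) is the same as the paper's.

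Your version therefore avoids Lemma~\ref{lem:mod5} and the second factorisation entirely. The same remark shows that the subcase ``$y\geq 3$ odd and $x$ even'' in the proof of Theorem~\ref{thm:main} is superfluous under $\leg{6}{p}=-1$, because the mod-$p$ argument of the preceding subcase never uses that $x$ is odd. The paper's longer case structure buys only organisation: it mirrors the branches that genuinely occur when $\leg{6}{p}=+1$ (Table~\ref{tab:sharp}, Problem~\ref{prob:plus1}), where the factorisation of $6^{y}$ really is needed.

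One small wording fix in Step~2. The factors are powers $13^{j}$ with $j\geq 0$, not ``positive powers''. Also, $z-6^{t}\geq 1$ holds because $z^{2}>6^{2t}$. Coprimality then forces the smaller factor to be $13^{0}=1$.
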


Finally, we prove that hypothesis \eqref{eq:hyp} cannot be relaxed: each
branch of the factorisation argument used in the proof of
Theorem~\ref{thm:main} is realised by an actual solution of \eqref{eq:main}
with $\leg{6}{p}=+1$ (Section~\ref{sec:sharp}).

\begin{remark}\label{rem:improvement}
Theorem~\ref{thm:main} strengthens the case $n=1$ treated in earlier work of
the author, where the additional hypothesis $p\not\equiv 1\pmod{16}$ (or
$p\equiv 17\pmod{144}$) was required, and it solves the open problems
formulated there. The improvement comes from two observations: the
Ramanujan--Nagell type equation \eqref{eq:rn} is now settled completely by
Theorem~\ref{thm:rn}, and any prime of the form $2\cdot 6^{m}+1$ with
$m\geq 2$ satisfies $p\equiv 1\pmod{24}$, hence $\leg{6}{p}=+1$.
\end{remark}

\subsection{Organisation}

Section~\ref{sec:prelim} collects the preliminary lemmas.
Section~\ref{sec:rn} is devoted to the proof of Theorem~\ref{thm:rn}.
Theorem~\ref{thm:main} is proved in Section~\ref{sec:proof}, and
Theorem~\ref{thm:13} in Section~\ref{sec:13}. In Section~\ref{sec:sharp} we
discuss the sharpness of the Legendre condition and the behaviour of the
small primes $2,3,5$. Section~\ref{sec:comp} describes the computational
verification of our results, and Section~\ref{sec:open} contains open
problems.

\section{Preliminaries}\label{sec:prelim}

Throughout, $p$ denotes a prime and $\leg{\cdot}{p}$ the Legendre symbol. We
recall that for an odd prime $p$,
\[
\leg{2}{p}=(-1)^{(p^{2}-1)/8},
\qquad
\leg{3}{p}=
\begin{cases}
+1, & p\equiv \pm 1 \pmod{12},\\
-1, & p\equiv \pm 5 \pmod{12}.
\end{cases}
\]

\begin{lemma}\label{lem:classes}
Let $p\geq 5$ be a prime. Then $\leg{6}{p}=-1$ if and only if
\[
p\equiv 7,\,11,\,13 \ \text{or}\ 17 \pmod{24}.
\]
\end{lemma}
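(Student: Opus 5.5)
We use multiplicativity, $\leg{6}{p}=\leg{2}{p}\leg{3}{p}$, and read off each factor from the formulas just recalled. For $p\geq 5$ the prime $p$ is coprime to $24$, so $p$ lies in one of the $\varphi(24)=8$ reduced classes $1,5,7,11,13,17,19,23\pmod{24}$. Since $24$ is a multiple of both $8$ and $12$, each class determines $p\bmod 8$, hence $\leg{2}{p}$, and $p\bmod 12$, hence $\leg{3}{p}$.

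Recall that $\leg{2}{p}=+1$ exactly when $p\equiv\pm1\pmod 8$, and $\leg{3}{p}=+1$ exactly when $p\equiv\pm1\pmod{12}$. We tabulate the eight classes:
\begin{center}
\begin{tabular}{cccccc}
\toprule
$p \bmod 24$ & $p\bmod 8$ & $p\bmod 12$ & $\leg{2}{p}$ & $\leg{3}{p}$ & $\leg{6}{p}$\\
\midrule
$1$ & $1$ & $1$ & $+1$ & $+1$ & $+1$\\
$5$ & $5$ & $5$ & $-1$ & $-1$ & $+1$\\
$7$ & $7$ & $7$ & $+1$ & $-1$ & $-1$\\
$11$ & $3$ & $11$ & $-1$ & $+1$ & $-1$\\
$13$ & $5$ & $1$ & $-1$ & $+1$ & $-1$\\
$17$ & $1$ & $5$ & $+1$ & $-1$ & $-1$\\
$19$ & $3$ & $7$ & $-1$ & $-1$ & $+1$\\
$23$ & $7$ & $11$ & $+1$ & $+1$ & $+1$\\
\bottomrule
\end{tabular}
\end{center}
The product $\leg{6}{p}$ equals $-1$ exactly when the two factors differ. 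By the table this happens precisely for the classes $7,11,13,17\pmod{24}$, which is the claimed equivalence.

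There is no real obstacle here. The only points needing care are the reduction from mod $24$ to mod $8$ and mod $12$, and the observation that $p=2,3$ are excluded by the hypothesis $p\geq5$. For those primes $\leg{6}{p}=0$, and they do not lie in a reduced residue class.
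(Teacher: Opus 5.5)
Your proof is correct and takes essentially the same approach as the paper: multiplicativity $\leg{6}{p}=\leg{2}{p}\leg{3}{p}$, followed by a case check over the reduced residue classes modulo $24$. Your table writes out explicitly the sign configurations that the paper resolves in one line via the Chinese remainder theorem, and all eight rows of your table are correct.
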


\begin{proof}
Since $\leg{6}{p}=\leg{2}{p}\leg{3}{p}$, the symbol equals $-1$ exactly when
$\leg{2}{p}$ and $\leg{3}{p}$ have opposite signs. Resolving the four sign
configurations by the Chinese remainder theorem gives the four classes
$7,11,13,17$ modulo $24$; the remaining reduced classes $1,5,19,23$ give
$\leg{6}{p}=+1$.
\end{proof}

\begin{theorem}[Mih\u{a}ilescu \cite{Mihailescu2004}]\label{thm:catalan}
The only solution of $u^{r}-v^{s}=1$ in integers $u,v,r,s$ with
$\min\{u,v,r,s\}>1$ is $(u,v,r,s)=(3,2,2,3)$.
\end{theorem}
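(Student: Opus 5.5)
We will not reprove this result here. Theorem~\ref{thm:catalan} is quoted from \cite{Mihailescu2004}, and in the rest of the paper we only use it as a black box, to rule out coincidences $p^{a}-6^{b}=\pm1$ and similar near-collisions of perfect powers. Still, the route one would follow to establish it is as follows, and it follows Mih\u{a}ilescu's strategy.

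\emph{Reduction to prime exponents and the easy cases.} If $u^{r}-v^{s}=1$ with $r,s>1$, then replacing $r,s$ by prime divisors $p\mid r$, $q\mid s$ gives a solution $X^{p}-Y^{q}=1$ with $X=u^{r/p}$, $Y=v^{s/q}$. So it suffices to treat prime exponents $p,q$.
\begin{itemize}
\item If $q=2$, the equation $X^{p}=Y^{2}+1$ is impossible by Lebesgue's argument, which factors $Y^{2}+1$ in $\mathbb{Z}[i]$.
\item If $p=2$, the equation $X^{2}-Y^{q}=1$ gives only $3^{2}-2^{3}$ by Ko Chao's theorem, which factors $(X-1)(X+1)=Y^{q}$ and uses a descent.
\end{itemize}
This identifies the unique solution, and we may then assume $p,q$ are distinct odd primes.

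\emph{Cassels' relations and the double Wieferich condition.} By Cassels, $q\mid X$ and $p\mid Y$, and
\[
X-1=p^{q-1}a^{q}, \qquad \frac{X^{p}-1}{X-1}=p\,b^{q},
\]
and symmetrically for $Y$. Working in $\mathbb{Q}(\zeta_{p})$, the element $X-\zeta_{p}$ generates, up to a controlled factor, a $q$-th power of an ideal. Using the Stickelberger annihilator of the class group, Mih\u{a}ilescu deduces the double Wieferich congruences
\[
p^{q-1}\equiv 1 \pmod{q^{2}}, \qquad q^{p-1}\equiv 1 \pmod{p^{2}}.
\]
A second Stickelberger-type argument, combined with an analysis of the plus part of the class group via Thaine's theorem, yields $p\mid h_{p}^{+}$ or $q\mid h_{q}^{+}$ type constraints together with the inequalities $p<4q^{2}$ and $q<4p^{2}$.

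\emph{Main obstacle.} The decisive and hardest step is the final contradiction in the ``plus'' part. One shows that $q\nmid h_{p}^{+}$ under the above bounds, or, in Mih\u{a}ilescu's later simplified version, one combines the Stickelberger argument with the bounds to force $p\equiv 1\pmod{q}$ and $q\equiv 1\pmod{p}$ simultaneously, which is impossible. This requires the genuinely deep input, namely Thaine's annihilation of units modulo cyclotomic units. It is the step we would expect to be the main difficulty, and it is precisely why we cite \cite{Mihailescu2004} rather than reproduce it.
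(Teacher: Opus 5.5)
Like the paper, which states this theorem without proof and simply cites Mih\u{a}ilescu, you use it as a black box, and that is the appropriate approach here. Your optional sketch should be corrected before you keep it: the class-number condition has the crossed form $q\mid h_p^{+}$ (not $p\mid h_p^{+}$ or $q\mid h_q^{+}$), and the endgame never yields $p\equiv 1\pmod q$ and $q\equiv 1\pmod p$ simultaneously --- the Thaine-based plus argument only shows that the larger prime, say $p$, satisfies $p\equiv 1\pmod q$, the double Wieferich condition upgrades this to $p\equiv 1\pmod{q^{2}}$, and $p<4q^{2}$ then forces $p=2q^{2}+1$, which is divisible by $3$ for $q\neq 3$ (exponent $3$ being classical).
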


\begin{theorem}[Zsigmondy \cite{Zsigmondy1892}]\label{thm:zsigmondy}
Let $a>b>0$ be coprime integers. For every integer $r\geq 3$, the number
$a^{r}-b^{r}$ has a prime divisor that does not divide $a^{k}-b^{k}$ for any
$1\leq k<r$, except when $(a,b,r)=(2,1,6)$. Such a prime divisor $\ell$
satisfies $\ell\equiv 1\pmod r$.
\end{theorem}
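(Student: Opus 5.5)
We would follow the classical cyclotomic route. Write $\Phi_d(X,Y)=\prod_{\zeta}(X-\zeta Y)$ for the homogenised $d$-th cyclotomic polynomial, the product running over primitive $d$-th roots of unity $\zeta$. Then $a^{r}-b^{r}=\prod_{d\mid r}\Phi_d(a,b)$, and each $\Phi_d(a,b)$ is a positive integer. Since $\gcd(a,b)=1$, any prime $\ell\mid a^{r}-b^{r}$ is coprime to $ab$. The order $e$ of $ab^{-1}$ modulo $\ell$ is the least $k$ with $\ell\mid a^{k}-b^{k}$. Hence $\ell$ is a primitive divisor of $a^{r}-b^{r}$ exactly when $e=r$. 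Since $e\mid \ell-1$ by Fermat, this immediately gives the final assertion $\ell\equiv 1\pmod r$. So everything reduces to producing a prime $\ell$ with $e=r$.

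The first key step is the standard arithmetic lemma. Suppose $\ell\mid\Phi_r(a,b)$ but $\ell$ is not primitive, i.e.\ $e<r$. Then $r=e\ell^{k}$ with $k\geq1$. Consequently $\ell$ is the largest prime factor $P(r)$ of $r$, because $e\mid \ell-1$. Moreover $v_\ell(\Phi_r(a,b))=1$ when $r\geq3$. We would prove this with the lifting-the-exponent lemma applied to $a^{r}-b^{r}$ versus $a^{r/\ell}-b^{r/\ell}$, together with $\Phi_r\mid (a^{r}-b^{r})/(a^{r/\ell}-b^{r/\ell})$. The case $\ell=2$ is excluded for $r\ge3$ because then $e=1$ and $r$ would be a power of $2$, where $a^{2^{j}}+b^{2^{j}}\equiv 2\pmod 4$ handles it. The consequence is: if $a^{r}-b^{r}$ has no primitive prime divisor, then $\Phi_r(a,b)\mid P(r)$, and in particular $\Phi_r(a,b)\leq r$.

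The second step is a lower bound for $\Phi_r(a,b)=\prod_\zeta|a-\zeta b|$. Each factor satisfies $|a-\zeta b|\geq a-b\geq1$, with strict improvement when $\zeta\neq1$ is non-real. Pairing conjugates gives $|a-\zeta b|^{2}=a^{2}-2ab\cos\theta+b^{2}$. When $a-b\geq2$ this yields $\Phi_r(a,b)\geq 2^{\varphi(r)}$. That exceeds $P(r)$ for all $r\geq3$ except a short explicit list, which can be checked directly.

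The main obstacle is the case $a=b+1$, where the trivial bound gives only $1$. Here we would use $|a-\zeta b|\geq \max(1,\,2b\sin(\theta/2))$ and count the roots with $\theta$ bounded away from $0$. Roughly half of the primitive roots have $\theta\ge\pi/2$, which gives $|a-\zeta b|\geq\sqrt{a^2+b^2}\geq\sqrt5$. This yields $\Phi_r(a,b)\geq 5^{\varphi(r)/4}$ or a similar bound, which beats $r$ once $r$ is moderately large or $b\geq2$. The remaining finitely many pairs $(b,r)$, essentially $b=1$ and small $r$, are checked by hand. This is exactly where $2^{6}-1=63=3^{2}\cdot7$ appears with $\Phi_6(2,1)=3=P(6)$, giving the sole exception $(2,1,6)$.
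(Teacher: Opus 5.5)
You prove the final assertion the same way the paper does: the order of $ab^{-1}$ modulo $\ell$ is exactly $r$, and it divides $\ell-1$. The paper does not prove the existence of a primitive divisor at all; it cites Zsigmondy. So the rest of your proposal goes beyond the paper. Your first two steps are sound. A non-primitive prime dividing $\Phi_r(a,b)$ must equal $P(r)$ and divides it exactly once, so the absence of a primitive divisor forces $\Phi_r(a,b)\in\{1,P(r)\}$. For $a-b\geq 2$ you have $\Phi_r(a,b)\geq 2^{\varphi(r)}>P(r)$ for every $r\geq 3$, because $\varphi(r)\geq\max\{2,P(r)-1\}$. So your ``short explicit list'' there is actually empty.

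The gap is in the case $a=b+1$, which is where the theorem has real content and where the exception lives. The claim that roughly half of the primitive $r$-th roots of unity satisfy $\mathrm{Re}\,\zeta\leq 0$ is not proved, and it is false in the form you need. For $r=6$ no primitive root has $\mathrm{Re}\,\zeta\leq 0$, and for $r=14$ only $2$ of the $6$ do. So $\Phi_r(a,b)\geq 5^{\varphi(r)/4}$ does not follow from your argument. More importantly, nothing you wrote shows that only finitely many pairs $(b,r)$ remain to be checked by hand. That needs an explicit lower bound for this count, uniform in $r$.

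You could supply one by inclusion--exclusion. With $\omega(r)$ the number of distinct prime factors of $r$, the number of integers in $[r/4,3r/4]$ coprime to $r$ is at least $\varphi(r)/2-2^{\omega(r)}$; you would then work out the resulting finite list. The classical route is cleaner. Write $r=\ell^{k}m$ with $\ell=P(r)$ and $\ell\nmid m$, and set $A=a^{\ell^{k-1}}$, $B=b^{\ell^{k-1}}$. Then $\Phi_r(a,b)=\Phi_m(A^{\ell},B^{\ell})/\Phi_m(A,B)\geq\bigl((A^{\ell}-B^{\ell})/(A+B)\bigr)^{\varphi(m)}$. For $\ell\geq 3$ this exceeds $\ell$ unless $(A,B,\ell)=(2,1,3)$ and $\varphi(m)=1$. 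For $\ell=2$ one gets $\Phi_r(a,b)=A+B\geq 5$. The only leftover cases are $(a,b,r)=(2,1,3)$, where $\Phi_3(2,1)=7$, and $(2,1,6)$, where $\Phi_6(2,1)=3=P(6)$ gives the exception.
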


The last assertion of Theorem~\ref{thm:zsigmondy} follows from the fact that
$r$ is the order of $ab^{-1}$ modulo a primitive prime divisor $\ell$ of
$a^{r}-b^{r}$, hence $r\mid\ell-1$.

\begin{lemma}\label{lem:mod5}
For every odd integer $y\geq 3$,
\[
3^{y}-2^{\,y-2}\equiv 0 \pmod{5}.
\]
\end{lemma}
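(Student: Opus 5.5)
We argue by writing $y=2k+1$ with $k\geq 1$, and we work modulo $5$. First note that $3\equiv -2\pmod 5$. Hence
\[
3^{y}=3^{2k+1}\equiv (-2)^{2k+1}=-2^{2k+1}\pmod 5 .
\]
Moreover $y-2=2k-1$, so $2^{y-2}=2^{2k-1}$. Substituting,
\[
3^{y}-2^{y-2}\equiv -2^{2k+1}-2^{2k-1}=-2^{2k-1}\bigl(2^{2}+1\bigr)=-5\cdot 2^{2k-1}\equiv 0\pmod 5 .
\]
The exponent $2k-1$ is a nonnegative integer because $k\geq 1$, so every step is a congruence between integers. This settles the claim.

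As an alternative check, both $3$ and $2$ have order $4$ modulo $5$. It therefore suffices to verify the two odd residues of $y$ modulo $4$:
\begin{itemize}
\item For $y\equiv 1\pmod 4$, we have $y-2\equiv 3\pmod 4$, and $3^{y}-2^{y-2}\equiv 3-8\equiv 0\pmod 5$.
\item For $y\equiv 3\pmod 4$, we have $y-2\equiv 1\pmod 4$, and $3^{y}-2^{y-2}\equiv 27-2=25\equiv 0\pmod 5$.
\end{itemize}
Only the factoring step $2^{2k+1}+2^{2k-1}=5\cdot 2^{2k-1}$ needs care, and it is elementary, so we expect no real obstacle.
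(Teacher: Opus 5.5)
Your proof is correct and is essentially the paper's argument: both write $y=2k+1$ and compute directly modulo $5$. The only cosmetic difference is that you use $3\equiv -2 \pmod 5$ to factor out $2^{2k-1}$ and obtain $-5\cdot 2^{2k-1}$, whereas the paper uses $9\equiv 4\equiv -1 \pmod 5$ to obtain $5(-1)^{k}$; your periodicity check modulo $4$ is also valid but not needed.
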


\begin{proof}
Write $y=2k+1$ with $k\geq 1$. Since $9\equiv -1\pmod 5$ and
$4\equiv -1\pmod 5$,
\[
3^{y}=3\cdot 9^{k}\equiv 3(-1)^{k} \pmod 5,
\]
and
\[
2^{\,y-2}=2^{\,2k-1}=2\cdot 4^{\,k-1}\equiv 2(-1)^{k-1}=-2(-1)^{k}\pmod 5.
\]
Subtracting gives $3^{y}-2^{\,y-2}\equiv 5(-1)^{k}\equiv 0\pmod 5$.
\end{proof}

\begin{lemma}\label{lem:consecutive}
There is no pair $(T,m)$ of integers with $T\geq 2$ and $m\geq 1$ such that
\[
(T-1)(T+1)=2^{\,m+1}\,3^{m}.
\]
\end{lemma}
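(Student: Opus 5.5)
Since $T-1$ and $T+1$ differ by $2$, their greatest common divisor divides $2$. The plan is therefore to use coprimality to split the right-hand side between the two factors, which reduces everything to a small exponential equation.

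First, $T$ must be odd. If $T$ were even, the product $(T-1)(T+1)$ would be odd, but $2^{m+1}3^m$ is even since $m\geq 1$. So $T-1$ and $T+1$ are consecutive even numbers. Exactly one of them is divisible by $4$, and $\gcd(T-1,T+1)=2$. Moreover $3$ divides exactly one of them, because they differ by $2$ and so cannot both be multiples of $3$. Consequently one factor equals $2\cdot 3^{\varepsilon m}$ and the other equals $2^{m}\cdot 3^{(1-\varepsilon)m}$, for some $\varepsilon\in\{0,1\}$. For $m\geq 2$, the factor carrying $2^m$ is the one divisible by $4$. When $m=1$ the product is $12$, and $T^2=13$ has no integer solution, so this case is excluded directly.

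For $m\geq 2$ there are two cases.

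\emph{Case 1: the power of $3$ sits with the factor $2$.} Then $\{T-1,T+1\}=\{2\cdot 3^m,\,2^m\}$, so $|2\cdot 3^m-2^m|=2$, that is, $|3^m-2^{m-1}|=1$. Here $3^m>2^{m-1}$, so this says $3^m-2^{m-1}=1$. We rule it out by growth: the difference $3^m-2^{m-1}$ is at least $7$ for $m\geq 2$. Alternatively, Theorem~\ref{thm:catalan} gives the same conclusion.

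\emph{Case 2: the power of $3$ sits with the factor $2^m$.} Then $\{T-1,T+1\}=\{2,\,2^m3^m\}$, so $6^m-2=\pm 2$. This forces $6^m\in\{0,4\}$, which is impossible.

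Both cases fail, so no pair $(T,m)$ exists. The only delicate point is the bookkeeping of which factor absorbs the $2$-adic part when $m$ is small; the case $m=1$ handled above covers this. The rest is routine.
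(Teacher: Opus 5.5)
Your proof is correct and follows essentially the same route as the paper: use $\gcd(T-1,T+1)=2$ to split $2^{m+1}3^{m}$ between the two factors, which reduces the problem to $3^{m}-2^{m-1}=1$ or $6^{m}=4$, and both fail trivially. The only difference is that the paper halves first, writing $T-1=2u$, $T+1=2v$ with coprime $u,v$ and $uv=2^{m-1}3^{m}$. This handles $m=1$ within the general argument, whereas you track $2$-adic valuations and treat $m=1$ separately.
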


\begin{proof}
Write $T-1=2u$ and $T+1=2v$, so that $v-u=1$, $\gcd(u,v)=1$ and
$uv=2^{\,m-1}3^{m}$. Since $u$ and $v$ are coprime, each is composed of a
single prime, and the condition $v-u=1$ leaves the three possibilities
\[
\{u,v\}=\bigl\{1,\,2^{\,m-1}3^{m}\bigr\},\quad
\{u,v\}=\bigl\{2^{\,m-1},\,3^{m}\bigr\},\quad
\{u,v\}=\bigl\{3^{m},\,2^{\,m-1}\bigr\}.
\]
The first gives $2^{\,m-1}3^{m}=2$, impossible since $3\mid 2^{\,m-1}3^{m}$.
The second gives $3^{m}-2^{\,m-1}=1$; but $3^{m}-2^{\,m-1}=2$ for $m=1$ and
\[
3^{m}-2^{\,m-1}>3^{m}-3^{m-1}=2\cdot 3^{m-1}\geq 6 \qquad(m\geq 2),
\]
so it is never equal to $1$. The third gives $2^{\,m-1}-3^{m}=1$, which is
impossible since $3^{m}>2^{\,m-1}$.
\end{proof}

\begin{lemma}\label{lem:mod24}
If $m\geq 2$ is an integer and $p=2\cdot 6^{m}+1$ is prime, then
$p\equiv 1\pmod{24}$; in particular $\leg{6}{p}=+1$.
\end{lemma}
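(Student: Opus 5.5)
The plan is a direct congruence computation followed by Lemma~\ref{lem:classes}. Let $m\geq 2$. First we reduce $p=2\cdot 6^{m}+1$ modulo $8$ and modulo $3$, and then glue the two residues together by the Chinese remainder theorem.

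\emph{Modulo $8$.} We have $6^{m}=2^{m}3^{m}$, so $2\cdot 6^{m}=2^{m+1}3^{m}$. Since $m\geq 2$, the power of two is at least $2^{3}$, hence $8\mid 2\cdot 6^{m}$ and $p\equiv 1\pmod 8$. This is the only step where the hypothesis $m\geq 2$ is used. For $m=1$ one gets $p=13\equiv 5\pmod 8$, which lies in one of the exceptional classes.

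\emph{Modulo $3$.} Since $m\geq 1$, we have $3\mid 6^{m}$, so $p\equiv 1\pmod 3$.

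Combining $p\equiv 1\pmod 8$ and $p\equiv 1\pmod 3$ by the Chinese remainder theorem gives $p\equiv 1\pmod{24}$. The class $1$ is not among $7,11,13,17$. Since $p\geq 2\cdot 36+1=73\geq 5$, Lemma~\ref{lem:classes} applies and yields $\leg{6}{p}=+1$.

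As a check independent of that lemma, $p\equiv 1\pmod 8$ gives $\leg{2}{p}=+1$ by the supplementary law recalled above. Also $p\equiv 1\pmod{12}$, so $\leg{3}{p}=+1$, and multiplicativity gives $\leg{6}{p}=\leg{2}{p}\leg{3}{p}=+1$.

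There is no real obstacle: the argument is routine modular arithmetic, and the only point requiring care is the use of $m\geq 2$ in the mod-$8$ step.
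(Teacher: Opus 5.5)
Your proof is correct and follows the paper's argument: $8\mid 2^{m+1}3^{m}$ because $m\geq 2$, $3\mid 2^{m+1}3^{m}$, the Chinese remainder theorem gives $p\equiv 1\pmod{24}$, and Lemma~\ref{lem:classes} gives $\leg{6}{p}=+1$. Your extra check through $\leg{2}{p}$ and $\leg{3}{p}$ is valid but not needed.
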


\begin{proof}
Since $m+1\geq 3$, we have $8\mid 2^{\,m+1}3^{m}$, hence $p\equiv 1\pmod 8$;
and $3\mid 2^{\,m+1}3^{m}$, hence $p\equiv 1\pmod 3$. The Chinese remainder
theorem gives $p\equiv 1\pmod{24}$, and Lemma~\ref{lem:classes} yields
$\leg{6}{p}=+1$.
\end{proof}

\section{The Ramanujan--Nagell type equation $p^{x}=2\cdot 6^{m}+1$}%
\label{sec:rn}

This section is devoted to the proof of Theorem~\ref{thm:rn}. Recall that the
classical Ramanujan--Nagell equation $x^{2}+7=2^{n}$ and its generalisations
$x^{2}+D=p^{n}$, initiated by Nagell~\cite{Nagell1948}, have been studied
extensively; equation \eqref{eq:rn} is of a similar nature, with the roles of
the prime power and the pure power interchanged.

\begin{proof}[Proof of Theorem~\ref{thm:rn}]
Suppose that $p^{x}=2\cdot 6^{m}+1$ with $x\geq 2$, so that
\begin{equation}\label{eq:rnminus}
p^{x}-1=2^{\,m+1}\,3^{m}.
\end{equation}

\medskip
\noindent\textbf{Case 1: $x$ even.} Write $x=2t$ with $t\geq 1$. Then
\eqref{eq:rnminus} gives
\[
\bigl(p^{t}-1\bigr)\bigl(p^{t}+1\bigr)=2^{\,m+1}3^{m},
\]
which is impossible by Lemma~\ref{lem:consecutive} applied with $T=p^{t}\geq
5$.

\medskip
\noindent\textbf{Case 2: $x$ odd, $x\geq 3$.} By Zsigmondy's theorem
(Theorem~\ref{thm:zsigmondy}), applied with $(a,b,r)=(p,1,x)$ --- the
exceptional triple $(2,1,6)$ does not occur since $p\geq 5$ --- the integer
$p^{x}-1$ has a primitive prime divisor $\ell$, that is, a prime
$\ell\mid p^{x}-1$ such that $\ell\nmid p^{k}-1$ for every $1\leq k<x$, and
one has $\ell\equiv 1\pmod x$. On the other hand \eqref{eq:rnminus} shows
that $\ell\mid 2^{\,m+1}3^{m}$, hence $\ell\in\{2,3\}$. But $\ell=2$ would
give $2\equiv 1\pmod x$, i.e.\ $x=1$, and $\ell=3$ would give $3\equiv
1\pmod x$, i.e.\ $x\mid 2$; both contradict $x\geq 3$.

\medskip
Hence $x\geq 2$ is impossible, and $x=1$. \qedhere
\end{proof}

\begin{remark}\label{rem:rn}
Theorem~\ref{thm:rn} is best possible in the sense that \eqref{eq:rn} with
$x=1$ has solutions, for instance
\[
2\cdot 6+1=13,\qquad 2\cdot 6^{2}+1=73,\qquad 2\cdot 6^{3}+1=433,\qquad
2\cdot 6^{4}+1=2593,
\]
all four right-hand sides being prime. Whether \eqref{eq:rn} with $x=1$ has
infinitely many prime solutions is an open question; see
Section~\ref{sec:open}.
\end{remark}

\section{Proof of the main theorem}\label{sec:proof}

Let $p$ be a prime with $\leg{6}{p}=-1$; in particular $p\geq 5$. Let
$n\geq 1$ and suppose that $(x,y,z)$ is a solution of \eqref{eq:main} in
positive integers. Since $p^{x}$ is odd and $6^{y}$ is even, $z$ is odd. We
organise the proof according to the parities of $y$ and $x$.

\subsection{The case $y=1$}

Reducing $p^{x}+6=z^{2n}$ modulo $p$ gives $z^{2n}\equiv 6\pmod p$. Since
$z^{2n}=(z^{n})^{2}$ is a quadratic residue modulo $p$, this forces
$\leg{6}{p}\in\{0,+1\}$, contradicting \eqref{eq:hyp}. Hence $y\geq 2$.

\subsection{The case $y$ even}

Write $y=2m$ with $m\geq 1$. Then
\[
p^{x}=z^{2n}-6^{2m}=\bigl(z^{n}-6^{m}\bigr)\bigl(z^{n}+6^{m}\bigr).
\]
Since $z$ is odd and $6^{m}$ is even, both factors on the right are odd. Any
common divisor of the two factors divides their sum $2z^{n}$ and their
difference $2\cdot 6^{m}$, hence divides $2$; being odd, it equals $1$. The
two factors are therefore coprime, and since their product is the prime power
$p^{x}$, each of them is a power of $p$:
\[
z^{n}-6^{m}=p^{r},\qquad z^{n}+6^{m}=p^{s},\qquad r+s=x,\quad r<s.
\]
Subtracting yields $2\cdot 6^{m}=p^{r}\bigl(p^{s-r}-1\bigr)$. Since
$p\nmid 2\cdot 6^{m}$, this forces $r=0$, that is,
\begin{equation}\label{eq:catalanstep}
z^{n}-6^{m}=1,
\qquad
p^{x}=2\cdot 6^{m}+1.
\end{equation}

If $n\geq 2$, the first equation of \eqref{eq:catalanstep} with $m=1$ gives
$z^{n}=7$, which is impossible; with $m\geq 2$ it contradicts
Theorem~\ref{thm:catalan}, since the unique solution $3^{2}-2^{3}=1$ of
Catalan's equation does not have the form $z^{n}-6^{m}=1$. Hence $n=1$.

With $n=1$, the second equation of \eqref{eq:catalanstep} and
Theorem~\ref{thm:rn} give $x=1$, so $p=2\cdot 6^{m}+1$. If $m\geq 2$,
Lemma~\ref{lem:mod24} yields $\leg{6}{p}=+1$, contradicting \eqref{eq:hyp}.
Therefore $m=1$, $p=13$, and $z=1+6=7$: this is precisely the exceptional
solution $(p,n,x,y,z)=(13,1,1,2,7)$ of the statement.

\subsection{The case $y\geq 3$ odd and $x$ odd}

Reducing \eqref{eq:main} modulo $p$ gives $z^{2n}\equiv 6^{y}\pmod p$. Since
$y$ is odd,
\[
\leg{6^{y}}{p}=\leg{6}{p}^{y}=(-1)^{y}=-1,
\]
so $6^{y}$ is a quadratic non-residue modulo $p$, whereas $z^{2n}=(z^{n})^{2}$
is a residue: contradiction.

\subsection{The case $y\geq 3$ odd and $x$ even}

Write $x=2a$ with $a\geq 1$. Then
\[
\bigl(z^{n}-p^{a}\bigr)\bigl(z^{n}+p^{a}\bigr)=6^{y}.
\]
Both factors on the left are even, since $z^{n}$ and $p^{a}$ are odd. Their
greatest common divisor $d$ divides both their difference $2p^{a}$ and their
product $6^{y}$; since $p\geq 5$ does not divide $6^{y}$, we have $d=2$.
Writing
\[
z^{n}-p^{a}=2A,\qquad z^{n}+p^{a}=2B,
\]
we obtain
\[
AB=2^{\,y-2}\,3^{y},\qquad A<B,\qquad B-A=p^{a}.
\]
If a prime $q$ divided both $A$ and $B$, then $q$ would divide both
$B-A=p^{a}$ and $AB=2^{\,y-2}3^{y}$, hence $q=p\in\{2,3\}$, a contradiction;
thus $\gcd(A,B)=1$. Two coprime positive integers with product
$2^{\,y-2}3^{y}$ are each composed of a single prime, and the conditions
$A<B$ and $3^{y}>2^{\,y-2}$ leave exactly two possibilities:
\[
\{A,B\}=\bigl\{2^{\,y-2},\,3^{y}\bigr\}
\qquad\text{or}\qquad
\{A,B\}=\bigl\{1,\,2^{\,y-2}3^{y}\bigr\}.
\]

\medskip
\noindent\textbf{Subcase $p^{a}=3^{y}-2^{\,y-2}$.} By
Lemma~\ref{lem:mod5}, the right-hand side is divisible by $5$, hence
$p=5$; but $\leg{6}{5}=\leg{1}{5}=+1$, contradicting \eqref{eq:hyp}.

\medskip
\noindent\textbf{Subcase $p^{a}=2^{\,y-2}3^{y}-1$.} If $y=3$, then
$p^{a}=2\cdot 27-1=53$, so $p=53$; but $53\equiv 5\pmod{24}$, and
Lemma~\ref{lem:classes} gives $\leg{6}{53}=+1$, a contradiction. If
$y\geq 5$, then $y-2\geq 3$, so $3\mid 2^{\,y-2}3^{y}$ and
$8\mid 2^{\,y-2}3^{y}$, whence
\[
p^{a}\equiv -1\pmod 3
\qquad\text{and}\qquad
p^{a}\equiv -1\pmod 8.
\]
The first congruence forces $p\equiv 2\pmod 3$ with $a$ odd; the second
forces $p\equiv 7\pmod 8$ (with $a$ odd). By the Chinese remainder theorem,
$p\equiv 23\pmod{24}$; but Lemma~\ref{lem:classes} then gives
$\leg{6}{p}=+1$, contradicting \eqref{eq:hyp}.

\medskip
All cases are exhausted: under hypothesis \eqref{eq:hyp}, the only solution
of \eqref{eq:main} is $(p,n,x,y,z)=(13,1,1,2,7)$, and one checks directly
that $13+6^{2}=49=7^{2}$. This completes the proof of
Theorem~\ref{thm:main}. \hfill$\square$

\begin{proof}[Proof of Corollary~\ref{cor:density}]
By Lemma~\ref{lem:classes}, the primes with $\leg{6}{p}=-1$ are exactly those
in the classes $7,11,13,17\pmod{24}$. Dirichlet's theorem
\cite{Apostol1976} asserts that each of the $\varphi(24)=8$ reduced residue
classes modulo $24$ contains a set of primes of relative Dirichlet density
$1/8$, and the prime number theorem for arithmetic progressions gives
$\pi(x;24,c)\sim x/(8\log x)$ for each reduced class $c$. The union of our
four classes therefore has relative density $4/8=1/2$ and counting function
$\sim x/(2\log x)$. Theorem~\ref{thm:main} applies to each of these primes,
with the unique exception $p=13$.
\end{proof}

\section{The exceptional prime: proof of Theorem~\ref{thm:13}}%
\label{sec:13}

We now solve completely the equation
\begin{equation}\label{eq:13}
13^{x}+6^{y}=z^{2}
\end{equation}
in positive integers $x,y,z$. Suppose that $(x,y,z)$ is a solution; note that
$z$ is odd.

\medskip
\noindent\textbf{Case $y=1$.} Then $z^{2}=13^{x}+6$. If $x$ is even, then
$z^{2}\equiv 1+2\equiv 3\pmod 4$, which is impossible. If $x$ is odd,
reduction modulo $13$ gives $z^{2}\equiv 6\pmod{13}$; but
$\leg{6}{13}=\leg{2}{13}\leg{3}{13}=(-1)(+1)=-1$, a contradiction.

\medskip
\noindent\textbf{Case $y=2m$ even.} Then
$13^{x}=(z-6^{m})(z+6^{m})$. Both factors are odd, and any common divisor
divides $2\cdot 6^{m}$ and $2z$, hence equals $1$; being coprime with product
$13^{x}$, they satisfy $z-6^{m}=1$ and $z+6^{m}=13^{x}$, so
\[
13^{x}=2\cdot 6^{m}+1.
\]
By Theorem~\ref{thm:rn}, $x=1$, hence $13=2\cdot 6^{m}+1$, giving $m=1$,
$y=2$ and $z=7$: the solution $(x,y,z)=(1,2,7)$.

\medskip
\noindent\textbf{Case $y\geq 3$ odd, $x$ odd.} Reduction modulo $13$ gives
$z^{2}\equiv 6^{y}\pmod{13}$, but
$\leg{6^{y}}{13}=\leg{6}{13}^{y}=-1$ since $y$ is odd: a contradiction.

\medskip
\noindent\textbf{Case $y\geq 3$ odd, $x=2a$ even.} Then
$(z-13^{a})(z+13^{a})=6^{y}$. Both factors are even with greatest common
divisor $2$ (their gcd divides $2\cdot 13^{a}$ and $6^{y}$), so writing
$z-13^{a}=2A$, $z+13^{a}=2B$ gives $\gcd(A,B)=1$, $A<B$ and
$AB=2^{\,y-2}3^{y}$, hence as in Section~\ref{sec:proof},
\[
13^{a}=3^{y}-2^{\,y-2}
\qquad\text{or}\qquad
13^{a}=2^{\,y-2}3^{y}-1.
\]
The first alternative is impossible: by Lemma~\ref{lem:mod5} its right-hand
side is divisible by $5$, while $5\nmid 13^{a}$. In the second alternative,
$y=3$ gives $13^{a}=53$, which is false; and $y\geq 5$ gives
$13^{a}\equiv -1\pmod 3$, whereas $13^{a}\equiv 1\pmod 3$ for every $a$: a
contradiction.

\medskip
Hence $(x,y,z)=(1,2,7)$ is the only solution of \eqref{eq:13}.
\hfill$\square$

\section{Sharpness of the Legendre condition}\label{sec:sharp}

The condition $\leg{6}{p}=-1$ in Theorem~\ref{thm:main} is not an artefact of
the method: solutions of \eqref{eq:main} with $\leg{6}{p}=+1$ exist, and each
branch of the case analysis of Section~\ref{sec:proof} is realised by such a
solution. The examples are collected in Table~\ref{tab:sharp}.

\begin{table}[htbp]
\centering
\caption{Solutions of $p^{x}+6^{y}=z^{2}$ for primes with $\leg{6}{p}=+1$,
classified according to the branches of the proof of
Theorem~\ref{thm:main}.}\label{tab:sharp}
\begin{tabular}{@{}llll@{}}
\toprule
Identity & $p\bmod 24$ & Branch of the proof & Auxiliary equation \\
\midrule
$19+6=5^{2}$        & $19$ & $y=1$            & $z^{2}=p+6$ \\
$5^{4}+6^{3}=29^{2}$ & $5$  & $y$ odd, $x$ even & $p^{a}=3^{y}-2^{\,y-2}$ \\
$53^{2}+6^{3}=55^{2}$ & $5$ & $y$ odd, $x$ even & $p^{a}=2^{\,y-2}3^{y}-1$ \\
$73+6^{4}=37^{2}$    & $1$  & $y$ even         & $p^{x}=2\cdot 6^{m}+1$, $m=2$ \\
$433+6^{6}=217^{2}$  & $1$  & $y$ even         & $p^{x}=2\cdot 6^{m}+1$, $m=3$ \\
\bottomrule
\end{tabular}
\end{table}

Two remarks are in order. First, the identity $19+6=5^{2}$ belongs to the
trivial one-parameter family $p=(2k+1)^{2}-6$ with $(x,y)=(1,1)$; the primes
$43,163,283$ arise in the same way, and it is an open question whether this
family contains infinitely many primes. Secondly, the identities
$73+6^{4}=37^{2}$ and $433+6^{6}=217^{2}$ show that the equation
$p=2\cdot 6^{m}+1$ does have prime solutions with $m\geq 2$; by
Lemma~\ref{lem:mod24} all of them satisfy $p\equiv 1\pmod{24}$, which is
exactly why the case $y$ even can be closed under hypothesis
\eqref{eq:hyp}.

For completeness, we record the behaviour of the small primes excluded from
Theorem~\ref{thm:main}: the identity $2^{6}+6^{2}=10^{2}$; the three
solutions $(x,y,z)\in\{(1,1,3),(2,3,15),(6,4,45)\}$ of $3^{x}+6^{y}=z^{2}$
found by Sangam~\cite{Sangam2020}; and the identity $5^{4}+6^{3}=29^{2}$.

\section{Computational verification}\label{sec:comp}

All assertions of this paper have been checked numerically with elementary
routines (exact integer arithmetic; primality and Legendre symbols computed
with a standard computer algebra library).

\begin{enumerate}
\item A direct search over $1\leq x,y\leq 60$ and all primes $p<500$ shows
that the complete list of solutions of $p^{x}+6^{y}=z^{2}$ in that range is
\begin{gather*}
2^{6}+6^{2}=10^{2},\quad 3+6=3^{2},\quad 3^{2}+6^{3}=15^{2},\quad
3^{6}+6^{4}=45^{2},\\
5^{4}+6^{3}=29^{2},\quad 13+6^{2}=7^{2},\quad 19+6=5^{2},\quad
43+6=7^{2},\\
53^{2}+6^{3}=55^{2},\quad 73+6^{3}=17^{2},\quad 73+6^{4}=37^{2},\quad
163+6=13^{2},\\
283+6=17^{2},\quad 313+6^{3}=23^{2},\quad 409+6^{3}=25^{2},\quad
433+6^{6}=217^{2},
\end{gather*}
and every prime occurring in this list satisfies $\leg{6}{p}\in\{0,+1\}$,
except $p=13$, in agreement with Theorem~\ref{thm:main}.
\item For $n=2$, a search over $1\leq x,y\leq 21$ and all primes
$5\leq p<800$ with $\leg{6}{p}=-1$ produced no solution of
$p^{x}+6^{y}=z^{4}$, in agreement with Theorem~\ref{thm:main}.
\item The equation $13^{x}+6^{y}=z^{2}$ has no solution with
$1\leq x,y\leq 300$ other than $(1,2)$, and $13^{x}+6^{y}=z^{4}$ has no
solution with $1\leq x,y<60$, in agreement with Theorems~\ref{thm:main}
and~\ref{thm:13}.
\item For $1\leq m\leq 400$, the integer $2\cdot 6^{m}+1$ is never a perfect
power with exponent $\geq 2$, in agreement with Theorem~\ref{thm:rn}.
\end{enumerate}

\section{Open problems}\label{sec:open}

Theorem~\ref{thm:main} settles \eqref{eq:main} for half of all primes. The
complementary half leads to the following questions.

\begin{problem}\label{prob:plus1}
Determine all solutions of $p^{x}+6^{y}=z^{2}$ for primes $p$ with
$\leg{6}{p}=+1$. By the reduction of Section~\ref{sec:proof}, this is
equivalent to solving the three exponential Diophantine equations
\[
z^{2}-6=p^{x}\ (x\geq 2),\qquad
p^{a}=3^{y}-2^{\,y-2},\qquad
p^{a}=2^{\,y-2}3^{y}-1,
\]
together with the primality of $2\cdot 6^{m}+1$ (Theorem~\ref{thm:rn}).
\end{problem}

\begin{problem}\label{prob:five}
Prove that $5^{a}=3^{y}-2^{\,y-2}$ has $(a,y)=(2,3)$ as its only solution in
positive integers with $y$ odd. This would imply that $5^{x}+6^{y}=z^{2}$
has the unique solution $(x,y,z)=(4,3,29)$.
\end{problem}

\begin{problem}\label{prob:infinitude}
Are there infinitely many integers $m$ such that $2\cdot 6^{m}+1$ is prime?
The values $m=1,2,3,4,10,11,17,\dots$ give primes; the question is related to
classical conjectures on prime values of exponential sequences.
\end{problem}

\begin{problem}\label{prob:odd}
Study the equation $p^{x}+6^{y}=z^{2n+1}$ for odd exponents, where the
square-residue argument of Section~\ref{sec:proof} is no longer available.
\end{problem}

\begin{problem}\label{prob:general}
Extend the method to the equation $p^{x}+(2q)^{y}=z^{2n}$, where $q$ is an
odd prime, under the condition $\leg{2q}{p}=-1$, in the spirit of
\cite{MinaBacani2019,PakapongpunChattae2022,TadeeThaneepoon2023}.
\end{problem}

\section*{Acknowledgements}

The author thanks the anonymous referees for their careful reading and
valuable suggestions.

\section*{Declarations}

\textbf{Funding.} No funding was received for this work.
\smallskip

\textbf{Conflict of interest.} The author declares no conflict of interest.
\smallskip

\textbf{Data availability.} The computational verifications described in
Section~\ref{sec:comp} were carried out with elementary routines; the code is
available from the author upon reasonable request.

\end{document}